\numberwithin{equation}{section}
\newtheorem{theorem}{Theorem}[section]
\newtheorem{lemma}[theorem]{Lemma}
\newtheorem{proposition}[theorem]{Proposition}
\newtheorem{rem}[theorem]{Remark}
\renewcommand{\ge}{\geq}
\renewcommand{\le}{\leq}
\newcommand{\ind}{\mathbf{1}}
\newcommand{\Z}{\mathbb{Z}}
\newcommand{\bP}{{\ensuremath{\mathbf P}} }
\newcommand{\bE}{{\ensuremath{\mathbf E}} }
\DeclareMathSymbol{\leqslant}{\mathalpha}{AMSa}{"36} % nicer `smaller or equal'
\DeclareMathSymbol{\geqslant}{\mathalpha}{AMSa}{"3E} % nicer `larger or equal'
\DeclareMathSymbol{\eset}{\mathalpha}{AMSb}{"3F}     % nicer `emptyset'
\renewcommand{\leq}{\;\leqslant\;}                   % redef. of < or =
\renewcommand{\geq}{\;\geqslant\;}                   % redef. of > or =
\newcommand{\dd}{\,\text{\rm d}}             % a straight d for differentials
\newcommand{\bbE}{{\ensuremath{\mathbb E}} }
\newcommand{\bbN}{{\ensuremath{\mathbb N}} }
\newcommand{\bbP}{{\ensuremath{\mathbb P}} }
\newcommand{\bbZ}{{\ensuremath{\mathbb Z}} }
\newcommand{\ga}{\alpha}
\newcommand{\gb}{\beta}
\newcommand{\gga}{\gamma}            % \gg already exists...
\newcommand{\gd}{\delta}
\newcommand{\gep}{\varepsilon}       % \ge already exists...
\newcommand{\gO}{\Omega}
\def\captionfont@{\footnotesize}
\def\captionheadfont@{\scshape}
\long\def\@makecaption#1#2{%
  \vspace{2mm}
  \setbox\@tempboxa\vbox{\color@setgroup
    \advance\hsize-6pc\noindent
    \captionfont@\captionheadfont@#1\@xp\@ifnotempty\@xp
        {\@cdr#2\@nil}{.\captionfont@\upshape\enspace#2}%
    \unskip\kern-6pc\par
    \global\setbox\@ne\lastbox\color@endgroup}%
  \ifhbox\@ne % the normal case
    \setbox\@ne\hbox{\unhbox\@ne\unskip\unskip\unpenalty\unkern}%
  \fi
  \ifdim\wd\@tempboxa=\z@ % this means caption will fit on one line
    \setbox\@ne\hbox to\columnwidth{\hss\kern-6pc\box\@ne\hss}%
  \else % tempboxa contained more than one line
    \setbox\@ne\vbox{\unvbox\@tempboxa\parskip\z@skip
        \noindent\unhbox\@ne\advance\hsize-6pc\par}%
\fi
  \ifnum\@tempcnta<64 % if the float IS a figure...
    \addvspace\abovecaptionskip
    \moveright 3pc\box\@ne
  \else % if the float IS NOT a figure...
    \moveright 3pc\box\@ne
    \nobreak
    \vskip\belowcaptionskip
  \fi
\relax
}
\def\writefig#1 #2 #3 {\rlap{\kern #1 truecm
\raise #2 truecm \hbox{#3}}}
\newcommand{\var}{{\rm Var}}
\newcommand{\Tm}{T_{\rm mix}}
\newcommand{\Tr}{T_{\rm rel}}
\title[Cutoff for SEP on the complete graph]{Cutoff phenomenon for the simple exclusion process on the complete graph}
\begin{document}

\author{Hubert Lacoin}
\address{CEREMADE Université Paris Dauphine,
Place du Maréchal De Lattre De Tassigny,
75775 PARIS CEDEX 16 - FRANCE }
\email{lacoin@ceremade.dauphine.fr}
%\date{\empty}

\author{Rémi Leblond}
\address{Ecole Polytechnique, D\'epartement de Math\'ematiques Appliqu\'ees, 91128 Palaiseau, France}
\email{remi.leblond@polytechnique.edu}

\begin{abstract}
We study the time that the simple exclusion process on the complete graph needs to reach equilibrium in terms of 
total variation distance. For the graph with $n$ vertices and $1\ll k<n/2$ particles, we show that the mixing time is of order
$\frac{1}{2}n\log \min( k, \sqrt{n})$, and that around this time, 
for any $\gep$, the total variation distance drops from $1-\gep$ to $\gep$ in a time window of whose width is of order $n$ 
(i.e.\ in a much shorter time). Our proof is purely probabilistic and self-contained.
\\
2010 \textit{Mathematics Subject Classification: 60B10, 37A25, 82C22.
  }\\
  \textit{Keywords: Mixing time, Cutoff, Exclusion Process.}
\end{abstract}

\maketitle

\section{Introduction}

Let $G=(V,E)$ be a finite connected graph and $1\le k \le  |V|-1$ an integer. We define a \textit{ configuration } as an element of 
$\eta\in {\{0,1\}}^V$ with $k$ ones and $|V|-k$ zeros (ones can be considered as particles moving on the graph). 
The simple exclusion process on the graph $G$ with $k$ particles, can be described as follows: One starts with a given configuration 
and at each time step, one chooses an edge $e$ uniformly at random in $E$ and one interchanges the contents (zero or one) 
of the two vertice adjacent to $e$.
This Markov chain is reversible and has the uniform measure over all configurations as equilibrium measure.

In this paper we study the rate of convergence to equilibrium of the exclusion process on the complete graph with $n$ vertices, 
$k(n)$ particles, and $k$ going to infinity with $n$ . For obvious symmetry reasons, one can restrict the problem to 
the case $k\le n/2$ without any loss of generality.
For the sake of clarity we give a formal definition of the exclusion process

\medskip

The configuration space is 
\begin{equation}
 \gO(n,k(n)):=\Big\{ \eta \in \{0,1\}^{[1,n]\cap \bbN} \ | \ \sum_{x\in [1,n]\cap \bbN} \eta(x)=k(n)\Big\}.
\end{equation}
We consider the discrete-time Markov chain on $\gO(n,k(n))$ that at each time step, independently selects two vertice uniformly at random in 
$ \{1, \dots , n\}$ and interchanges their contents (emptiness or particle). Note that with probability $1/n$ the two chosen vertices 
are the same: in that case nothing happens.
We give now the transition kernel of this process:
The symmetric group $\mathcal S_n$ acts transitively on  $\gO(n,k(n))$ in a natural way. If $\sigma \in \mathcal S_n$  then
\begin{equation}
 (\sigma.\eta)(x):=\eta(\sigma^{-1}(x)),
\end{equation}
and for $\eta, \eta'\in \gO(n,k(n))$, $\eta\neq \eta'$, the transition rates are given by
\begin{equation}
 P(\eta, \eta')=\left\{
    \begin{array}{ll}
        \frac{2}{n^2} & \text{ if }  \eta'=\tau.\eta \text{ for some transposition } \tau \\
        0 & \text{if it is not the case}
    \end{array}\right.
\end{equation}
One can check that this implies $ P(\eta, \eta) \ge 1/2$.

This Markov chain is reversible, aperiodic and its equilibrium measure is the uniform measure on $\gO(n,k(n))$ that we denote by $\pi$.
Given an initial configuration $\xi\in \gO(n,k(n))$, one writes $\bbP^{\xi}$ (and $\bbE^{\xi}$ denotes the associated expectation) for the
law of the Markov chain $(\eta_t)_{t\ge 0}$ started from the configuration $\eta_0=\xi$ and $\mu_t^{\xi}$ for the marginal distribution of
$\bbP^{\xi}$ at time  $t$ ($t\in \bbN$).

We study the convergence to equilibrium of this chain. Distance to equilibrium is given by the following quantity
\begin{equation}
 d^{(n)}(t):= \max_{\xi\in \gO(n,k(n))} \| \mu^{\xi}_t-\pi\|.
\end{equation}
where $\|\cdot\|$ denotes the total variation distance: for two measures on $\gO(n,k(n))$ 
\begin{equation}
  \| \mu-\pi\|:= \frac{1}{2}\sum_{\eta\in \gO(n,k(n))} |\mu(\eta)-\pi(\eta)|.
\end{equation}

The main result of this paper is a sharp estimate of the time needed to reach equilibrium.

\begin{theorem}\label{mainres}
If $\lim_{n\to \infty } k(n)/\sqrt{n}= \infty$, then for every $\gep>0$ there exists $\gb>0$ such that for all $n$

\begin{equation}\begin{split}\label{cokkk}
d^{(n)}\left(\frac{1}{4}n\log n+\gb n\right)&\le \gep\\
d^{(n)}\left(\frac{1}{4}n\log n-\gb n\right)&\ge 1-\gep.
\end{split}\end{equation}

\medskip

If $\lim_{n\to \infty } k(n)/\sqrt{n}=0 $,  and $\lim_{n\to \infty} k(n)= \infty$, then for every $\gep>0$ there exists $\gb>0$ such that
for all $n$
\begin{equation}\begin{split}\label{cokk}
d^{(n)}\left(\frac{1}{2}n\log k(n)+\gb n\right)&\le \gep\\
d^{(n)}\left(\frac{1}{2}n\log k(n)-\gb n\right)&\ge 1-\gep.
\end{split}\end{equation}

\medskip

If $k(n)/\sqrt{n}\to l\in(0,\infty)$, then \eqref{cokkk} and \eqref{cokk} hold.

\end{theorem}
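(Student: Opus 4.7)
The plan is to exploit the vertex-transitivity of $K_n$ to project the problem onto a one-dimensional birth-and-death chain, and then to analyse that chain by drift estimates for the lower bound and by a monotone coupling for the upper bound.

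\textbf{Reduction to a one-dimensional chain.} Since $\mathcal S_n$ acts transitively on $\gO(n,k)$, $d^{(n)}(t) = \|\mu_t^{\xi} - \pi\|$ does not depend on $\xi$; fix $\xi = \ind_{[1,k]}$. The stabiliser $\mathcal S_k \times \mathcal S_{n-k}$ of $\xi$ preserves the law of $\eta_t$ for all $t$, so conditionally on $Z_t := \sum_{x=1}^k \eta_t(x)$ the configuration $\eta_t$ is uniform on its $\mathcal S_k \times \mathcal S_{n-k}$-orbit, and the same holds for $\pi$. One therefore obtains \[ d^{(n)}(t) = \|\mathcal L(Z_t) - \nu\|, \] with $\nu$ the hypergeometric $(n,k,k)$ law. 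The process $Z_t$ is a birth-and-death chain on $\{0,\ldots,k\}$ with $\bbP(z\to z+1) = 2(k-z)^2/n^2$ and $\bbP(z\to z-1) = 2z(n-2k+z)/n^2$; its drift is $\bbE[Z_{t+1} - Z_t \mid Z_t] = -\tfrac{2}{n}(Z_t - k^2/n)$, whence $\bbE[Z_t] = k^2/n + (k - k^2/n)(1 - 2/n)^t$, and a parallel recursion yields $\mathrm{Var}(Z_t) = O(k(1-k/n))$ uniformly in $t$, the same scale as $\mathrm{Var}_\nu(Z)$.

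\textbf{Lower bound.} At $t = \tfrac12 n \log \min(k, \sqrt n) - \gb n$ I would show that $\mathcal L(Z_t)$ is concentrated away from the bulk of $\nu$. If $k/\sqrt n \to \infty$, $\bbE[Z_t] - k^2/n$ is of order $e^{2\gb}\sqrt{k(1-k/n)}$, which dominates both standard deviations as $\gb\to\infty$, and Chebyshev separates the two laws on a half-line. If $k/\sqrt n \to 0$, $\bbE[Z_t]\sim e^{2\gb}$ is a large constant while $\nu(\{0\}) = 1 - O(k^2/n) \to 1$; a second-moment (Paley--Zygmund) bound then gives $\bbP(Z_t \geq 1) \to 1$, and the event $\{Z = 0\}$ carries almost all the TV mass. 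The intermediate regime $k/\sqrt n\to \ell\in(0,\infty)$ is handled by either argument.

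\textbf{Upper bound.} I would use a monotone coupling of $(Z_t)$ started at $Z_0 = k$ with $(Z_t')$ started in $\nu$. Because $(k-z)^2$ is nonincreasing and $z(n-2k+z)$ is nondecreasing in $z$, the two chains can be coupled so that $Z_t \geq Z_t'$ for all $t$ and they remain equal once they meet; this gives $\|\mathcal L(Z_t) - \nu\| \leq \bbP(Z_t > Z_t')$. The gap $D_t := Z_t - Z_t'$ has contractive drift $\bbE[D_{t+1} \mid D_t] \leq (1-2/n) D_t$, so $\bbE[D_t] \leq k(1-2/n)^t$. The target is $\bbP(D_t > 0) \leq \gep$ at $t = \tfrac12 n \log \min(k, \sqrt n) + \gb n$.

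\textbf{Main obstacle.} The delicate step is this last one: controlling $\bbE[D_t]$ alone is off by a constant factor, and what is really needed is that $D_t$ be \emph{absorbed} at $0$ within the cutoff window. In the small-$k$ regime, $Z_t$ behaves essentially as a pure death chain with rate $\sim 2Z_t/n$, and the constant $\tfrac12$ comes out of a coupon-collector-type bound on the hitting time of $0$, which lies above the drift-based estimate by a factor of $2$. In the large-$k$ regime, the meeting occurs when the contractive drift has brought $Z_t$ within the $O(\sqrt{k(1-k/n)})$ equilibrium fluctuations of $Z_t'$, and combining drift with these diffusive fluctuations is what produces the constant $\tfrac14$. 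Obtaining a sharp cutoff window of width $O(n)$ that works uniformly across the two regimes (and through the crossover $k\asymp\sqrt n$) is the principal technical difficulty.
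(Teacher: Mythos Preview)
Your reduction to the birth--and--death chain $Z_t$ and the drift formula are exactly what the paper does, and your lower-bound strategy via first and second moments is the paper's as well (the paper uses a coupon-collector argument rather than Paley--Zygmund for $k\ll\sqrt n$, but your alternative is perfectly viable). There are, however, two genuine problems.

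\textbf{The monotone coupling is too slow when $k\gg\sqrt n$.} In the standard monotone coupling driven by a common uniform $U_t$, the gap $D_t=Z_t-Z_t'$ is \emph{nonincreasing}: since $p(z)=2(k-z)^2/n^2$ is decreasing and $q(z)=2z(n-2k+z)/n^2$ is increasing, the event $\{D_{t+1}>D_t\}$ is empty, and a short computation gives
\[
\bbP\bigl(D_{t+1}=D_t-1\,\big|\,Z_t,Z_t'\bigr)=\bigl(p(Z_t')-p(Z_t)\bigr)+\bigl(q(Z_t)-q(Z_t')\bigr)=\frac{2D_t}{n}.
\]
Thus $D_t$ is a pure death chain, and its absorption time at $0$ is $\tfrac{n}{2}\log D_0+O(n)$. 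With $D_0$ of order $k$ this yields a coupling time of $\tfrac{n}{2}\log k$, which for $k\gg\sqrt n$ exceeds the target $\tfrac14 n\log n$ by a factor of two. The ``diffusive fluctuations'' you want to exploit simply do not exist for $D_t$ under this coupling. The paper instead uses an \emph{asynchronous} coupling in which at each step only one of the two chains moves (this is legitimate because $\bar P(i,i)\ge 1/2$); then $D_t$ can go up as well as down, and after the drift has brought $\bbE[D_t]$ down to order $k/\sqrt n$ at time $\tfrac14 n\log n$, one compares $D_t$ stochastically to a lazy symmetric random walk with jump rate $k^2/n^2$ and uses the reflection principle to show absorption in a further $O(n)$ steps. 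That comparison is the missing ingredient, and it cannot be grafted onto the monotone coupling.

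\textbf{The variance scale is misstated.} You write $\mathrm{Var}(Z_t)=O(k(1-k/n))$ and call this ``the same scale as $\mathrm{Var}_\nu(Z)$''. In fact $\mathrm{Var}_\nu(Z)\asymp k^2(n-k)^2/n^3$, which for $k\ll n$ is of order $k^2/n$, much smaller than $k(1-k/n)\asymp k$. Likewise the mean shift $\bbE[Z_t]-k^2/n$ at time $\tfrac14 n\log n-\gb n$ is of order $e^{2\gb}k(1-k/n)/\sqrt n$, not $e^{2\gb}\sqrt{k(1-k/n)}$. With your stated variance bound the Chebyshev separation fails in the entire range $\sqrt n\ll k\ll n$: the ratio $\mathrm{Var}/(\text{mean shift})^2$ is of order $n/(e^{4\gb}k)\to\infty$. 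The paper carries out the second-moment recursion carefully to obtain $\mathrm{Var}(Z_t)\le Ck^2/n+e^{\gb}k/\sqrt n$ at the relevant time, and this sharper bound is exactly what makes the Chebyshev argument go through.

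Your remark that the small-$k$ upper bound is ``off by a constant factor'' and needs a coupon-collector correction is also mistaken: Markov's inequality on $\bbE[D_t]$ already gives $\bbP(D_t\ge1)\le k(1-2/n)^t\le e^{-2\gb}$ at $t=\tfrac{n}{2}\log k+\gb n$, with the correct constant $\tfrac12$.
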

% 
% \begin{rem}\rm
% In fact we prove a more than the above statement. It can be seen from the proof that our result is optimal in the sense that
% transition from $d(t)=1-o(1)$ to $d(t)=o(1)$, cannot occurs in a time of order smaller than $n$. We also have explicit bounds for how one should choose $\gb$ depending on $\gep$ that seems to be rather sharp.
% \end{rem}

The function $t\mapsto d^{(n)}(t)$ is non-increasing. Thus
for any $\gep$ one can set
\begin{equation}
\Tm^{(n)} (\gep):= \inf\{ t \ | \ d^{(n)}(t)\le \gep\}=\sup\{ t \ | \ d^{(n)}(t)> \gep\}.
\end{equation}

From the above theorem one has that for any $\gep>0$
\begin{equation}
\Tm^{(n)}(\gep)-\Tm^{(n)}(1-\gep)=O_{\gep}(n)=o_{\gep}(\Tm),
\end{equation}
where $O_{\gep}$ and $o_{\gep}$ underline dependence in $\gep$.
In words: the time in which the distance to equilibrium drops from close to one to close to zero is at most of order $n$ and much smaller than $\Tm$.
This phenomenon is known as {\it cutoff}.
It was first identified by Diaconis and Shashahani \cite{DS81} for the random walk on the symmetric group generated by transpositions 
(see also \cite{BSZ} for a recent extension of this result with a probabilistic proof), 
and was given its name in the celebrated paper of Aldous and Diaconis \cite{AD} where it is shown that cutoff occurs for top-to-random card shuffle.
The bound that is obtained for $\Tm^{(n)}(\gep)-\Tm^{(n)}(1-\gep)$ (in the present case, $O(n)$) is often called the cutoff window . Our result is optimal in the sense that $O(n)$ is the best window one can obtain:
it will be shown in the proof that
\begin{equation}\label{optimo}
 \lim_{\gep\to 0} \liminf_{n\to \infty}\frac{\Tm^{(n)}(\gep)-\Tm^{(n)}(1-\gep)}{n}=\infty.
\end{equation}

\medskip

The simple exclusion process on the complete graph maps to another problem: the Bernouilli Laplace Diffusion Process. 
In \cite{DS87}, Diaconis and Shashahani studied this model and, using purely algebraic methods, proved cutoff in the case $k(n)=n/2$. 
Their method should be extendable to some other values of $k$ (e.g.\ using the same method Donnelly, Lloyd and Sudbury \cite{DLS}
extended the result to the case where $k=\alpha n$ for some $\alpha\in(0,1)$), but it clearly fails to give the right result when $k(n)\ll\sqrt{n}$ 
(e.g.\ the upper-bound given in Theorem $2$ fails to be sharp in that particular case). 
We also underline that the methods we present here are purely probabilistic.

\medskip

The simple exclusion process on the complete graph can be seen as a projection of the random walk on the symmetric group generated by 
transposition, and therefore the mixing time for simple exclusion is always smaller than the mixing time for random transposition.
What our result underlines is that while the spectral gaps (this a general result that holds for every graph, see \cite{CLR}) for the two processes are the same, the mixing time differ:
the mixing time for the random transposition model is $n/2\log n(1+o(1))$, whereas the mixing time for the exclusion process is at most
 $n/4 \log n(1+o(1))$. This result is specific to the complete graph:
for the exclusion process on the segment or on the circle, mixing time for exclusion process with a density of particle and 
interchange process are expected to coincide \cite{W}.

\medskip

Let us also compare the mixing time of the simple exclusion process with the mixing time of the simple exclusion process with $k$ 
labeled particles: the space of configurations is 
\begin{equation}
 \gO'(n,k(n)):=\left\{ \eta \in \{0,1,\dots,k\}^{[1,n]\cap \bbN} \ | \ \forall i\in\{1,\dots,k\}\ \exists ! x, \eta(x)=i \right\}.
\end{equation}
The rules for the evolution are the same: at each time step, one chooses two vertice at random and interchanges their contents. 
As there is no risk of confusion we use for the labeled process the same notation as that for the exclusion process. The equilibrium measure $\pi$ for this process is the uniform measure over
$\gO'(n,k(n))$. 
What we can show is that if $k\ll\sqrt{n}$ then the mixing times of labeled and unlabeled exclusion process coincide but that they differ as soon as $k\gg\sqrt{n}$.
We suspect that for every value of $k$ and every $\gep$ one has
\begin{equation}
 \Tm'(\gep):= \frac{n}{2}\log k+O(n).
\end{equation}
We prove in the following that
\begin{theorem}\label{auxres}
 For the exclusion process on the complete graph with $n$ vertice and $k$ labeled particles, for every $\gep$, there exists  
$\gb>0$ such that for every $k$ and $n$
\begin{equation}
 d^{(n)}\left(\frac{1}{2}n\log k-\gb n\right)\ge 1-\gep.
\end{equation}
Moreover if $\lim_{n\to \infty} k(n)/\sqrt{n}=0$ then for every $\gep$, there exist 
$\gb>0$ such that for all $n$
\begin{equation}
 d^{(n)}\left(\frac{1}{2}n\log k(n)+\gb n\right)\le \gep.
\end{equation}
\end{theorem}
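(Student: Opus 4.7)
Given a start $\xi$ with particle $i$ at $x_i$, I would distinguish $\mu_t^\xi$ from $\pi$ via
\[
N_t:=\#\{i\in[k]:\eta_t(x_i)=i\}
\]
together with its lower bound $M_t:=\#\{i\in[k]:\text{particle }i\text{ has never moved in }[0,t]\}\leq N_t$. Since particle $i$ is moved at a given step exactly when the selected pair is $\{x_i,y\}$ with $y\neq x_i$, of probability $p:=2(n-1)/n^2$, one has $\bbE^\xi[M_t]=k(1-p)^t\sim e^{2\beta}$ at $t=\tfrac n2\log k-\beta n$. A direct computation of the joint non-selection probability for two vertices shows $\mathrm{Cov}(\mathbf 1_{A_i},\mathbf 1_{A_j})=\bbP(A_i)^2\cdot O(t/n^2)$, where $A_i$ denotes the event that particle $i$ is never moved; summing over pairs gives $\mathrm{Var}^\xi(M_t)=\bbE^\xi[M_t]+o(1)$, and Chebyshev's inequality then yields $\bbP^\xi(M_t\geq e^{2\beta}/2)\geq 1-O(e^{-2\beta})$. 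On the equilibrium side, $\bbE_\pi[N_t]=k/n\leq 1/2$ since $k\leq n/2$, so Markov gives $\pi(N_t\geq e^{2\beta}/2)\leq e^{-2\beta}$; combining these with $N_t\geq M_t$ produces $\|\mu_t^\xi-\pi\|\geq 1-\varepsilon$ for $\beta$ large, uniformly in $k$.

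\textbf{Upper bound, assuming $k/\sqrt n\to 0$.}
The plan is to compare the labeled exclusion trajectories $(X_t^{(i)})_{i\in[k]}$ to a system $(Y_t^{(i)})_{i\in[k]}$ of $k$ independent lazy walks on $[n]$, each with the same one-particle marginal law as the exclusion (holding probability $1-p$; given a move, uniform new vertex). A single such walk on the complete graph has second eigenvalue $1-2/n$, so
\[
\|\mathcal L(Y_t^{(i)})-\mathrm{Unif}([n])\|\leq(1-1/n)(1-2/n)^t\leq e^{-2\beta}/k
\]
at $t=\tfrac n2\log k+\beta n$; sub-additivity of total variation under product couplings then gives $\|\mathcal L(Y_t^{(1)},\dots,Y_t^{(k)})-\mathrm{Unif}([n]^k)\|\leq e^{-2\beta}$, and a birthday-paradox computation produces $\|\mathrm{Unif}([n]^k)-\pi\|=1-(n)_k/n^k\leq k^2/(2n)=o(1)$ under the hypothesis.

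The main obstacle is the construction of a coupling between $(X_t^{(i)})$ and $(Y_t^{(i)})$, started from the same positions, that agrees up to a low-probability ``bad'' event, namely a particle-particle swap in the exclusion (a step where both $U_s$ and $V_s$ are occupied by labeled particles). Care is needed because the naive identity coupling (use the same pair $(U_s,V_s)$ in both processes) actually reproduces the exclusion dynamics on the $Y$-side and destroys the independence; instead one can use a graphical construction in which each particle carries its own attempt clock, synchronized with the exclusion's clock only on single-particle moves. A union bound then gives probability $O(tk^2/n^2)=O(k^2\log k/n)$ for the bad event in $[0,t]$, which is $o(1)$ as long as $k=o(\sqrt{n/\log n})$, covering most of the assumed range; to extend to the full range $k=o(\sqrt n)$ one argues that a particle-particle swap involving a particle whose marginal is already nearly uniform (which occurs for each particle by time $\tfrac n4\log k$ with high probability) permutes two nearly-uniform coordinates and hence does not spoil the coupling.
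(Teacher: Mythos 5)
Your lower bound is, modulo bookkeeping, the paper's own argument: you apply Chebyshev to the count $M_t$ of initial sites never touched by the swap sequence, while the paper applies Chebyshev to the coupon-collector time $\tau$ for those same sites; both then compare the resulting fixed-point count with its $O(1)$ equilibrium expectation. This part is correct.

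The upper bound is a genuinely different route, and it is where the gap lies. The paper's proof is a short symmetry argument: project onto $W(\eta)=\sum_{x=1}^k\ind_{\eta(x)\ne 0}$, observe that $\mu_t^{\eta_0}(\cdot\mid W=0)=\pi(\cdot\mid W=0)$ because the permutations of $\{k+1,\dots,n\}$ leave both the dynamics and the initial datum invariant and act transitively on $\{W=0\}$, and then reuse the scalar estimate of Section~3.1; since $\bar\pi(W\ge 1)=o(1)$ when $k\ll\sqrt n$, the conditional laws on $\{W=m\}$ for $m\ge 1$ contribute only $o(1)$ to $\|\mu_t-\pi\|$. Your plan instead couples the labeled exclusion with $k$ independent lazy walks. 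The outer estimates are correct (single-walk TV $=(1-1/n)(1-2/n)^t$, subadditivity over $k$ walks, and $\|\mathrm{Unif}([n]^k)-\pi\|\le \binom{k}{2}/n$), but the coupling step controls pathwise disagreement only by a union bound over particle--particle swaps, $O(tk^2/n^2)=O(k^2\log k/n)$, and as you acknowledge this is $o(1)$ only for $k=o(\sqrt{n/\log n})$, strictly short of the required range $k=o(\sqrt n)$. Your proposed patch -- that a swap occurring after the marginals are nearly uniform ``does not spoil the coupling'' -- is not an argument: once the exclusion and the $Y$-system have executed different updates, the trajectories differ and $\|\mathcal{L}(X_t)-\mathcal{L}(Y_t)\|\le\bbP(X_t\ne Y_t)$ gives nothing more, so you would need some comparison of the two time-$t$ laws that does not go through pathwise agreement, which is a nontrivial missing step. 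There is also a secondary unresolved point: even before any collision, the events ``particle $i$ is selected'' and ``particle $j$ is selected'' are negatively correlated under the exclusion dynamics but must be made independent for the $Y$'s, so the ``own attempt clock'' coupling needs an explicit construction you have not given. In short, your route plausibly works for $k\ll\sqrt{n/\log n}$ with more effort, but the last $\sqrt{\log n}$ factor is not handled, and the paper's projection/symmetry reduction is both shorter and covers the full stated range.
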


The mixing time of the simple exclusion process has been studied for some other graphs than the complete graph.
However, to our knowledge, cutoff has not been proved for any other graph.
We refer to \cite{W} for a study of the mixing time of the simple exclusion process on the segment $\{0,\dots,n\}$ 
(the edges of the graph are the $(k,k+1)$, $k\in [0,n-1]$), \cite{M} for simple exclusion on the $d$-dimensional torus, and \cite{Ol}
for a recent general study of the  exclusion process mixing time.

\medskip

The sequel of the paper is organized as follows
\begin{itemize}
 \item In Section \ref{badc}, we reduce the study of the the unlabeled exclusion process to the study of a birth of death chain, 
which is a first step towards the proof of Theorem \ref{mainres}.
 \item In Section \ref{petitk}, we prove Theorem \ref{mainres} in the case of small $k$.
 \item In Section \ref{Labelpro}, we prove Theorem \ref{auxres}.
 \item In Section \ref{grandk}, we prove Theorem \ref{mainres} in the case of large $k$.
\end{itemize}

\section{Reduction to the study of a birth and death chain}\label{badc}

Our Markov chain is a lazy simple random walk on a transitive graph. Therefore, by transitivity, the distance $ \| \mu^{\xi}_t-\pi\|$ does not depend on the initial configuration $\xi$. We can set  $\eta_0$ to be 
\begin{equation}
 (\eta_0)(x):=\left\{
    \begin{array}{ll}
       1 & \text{ if }  x\in [1, k(n)] \\
       0 & \text{if not}
    \end{array}\right.
\end{equation}
and we simply write $\bbP$ - and $\bbE$ for the associated expectation - (resp.\ $\mu_t$) for the law of $(\eta_t)_{t\ge 0}$ (resp.\ $\eta_t$) starting from this configuration.

\medskip

We now claim that for every $t$, $\mu_t$ is invariant under permutations of the coordinates in $\{1,\dots, k(n)\}$
and in $\{k(n)+1,\dots, n\}$. This is obviously true for $t=0$, and this remains true for $t> 0$ as the dynamic itself is invariant under these permutations.
Therefore, if one sets
\begin{equation}\begin{split}
 W(\eta)&:=\sum_{x=1}^{k(n)} \eta(x),\\
 W_t   &:= W(\eta_t).
\end{split}\end{equation}
then for any $m$ and $t$ such that $\mu(W_t=m)>0$,
$\mu_t(\cdot|W=m)$ is the uniform measure over all the configuration $\eta$ such that $W(\eta)=m$.

Let $\bar \mu_t$ and $\bar \pi$ be the law of $W$ under $\mu_t$ and $\pi$ respectively.
The preceding remarks imply that
\begin{equation}
 d(t)= \|\bar \mu_t-\bar \pi_t \|
\end{equation}
One can check that the evolution of $W_t$ is Markovian. Our problem is now confined to the study of the mixing time of this new Markov chain,
which is what is called  a {\sl birth and death chain} on $\{0,\dots, k(n)\}$.

We write $\bar \mu^i_t$ for the law of $W_t$ starting from $W_0=i$. As $(W_t)_{t\ge 0}$ is a projection of the Markov chain $(\eta_t)_{\ge 0}$, one has
\begin{equation}
\|\bar \mu_t-\bar \pi\|=\max_{\xi\in \gO(n,k(n))}\|\mu_t^{\xi}-\pi\|\ge \max_{i\in\{1,\dots,k\}}\|\bar \mu^i_t-\bar \pi\|\ge \|\bar \mu_t-\bar \pi\|,
\end{equation}
that is to say that for any value of $t$,  $W_t$ is farther from the equilibrium measure if $W_0=k$.

\begin{rem}\rm
 At this point of our analysis, one can already show that there is cutoff for our process. 
Indeed, the cutoff phenomenon for general birth and death chain has been studied in \cite{DLP}, 
in which the authors prove that $\Tr=o(\Tm(1/4))$ is a necessary and sufficient for having cutoff (where $\Tr$, the relaxation time
 is by definition the inverse of the spectral gap).
This condition can be checked rather easily in our case. However, one cannot get the location of the cutoff,
nor the correct order for the size of the window by using only this general result. 
\end{rem}

We use the notation $\bar P$ to denote the transition probability of $(W_t)_{t\ge 0}$. For the sake of clarity, we often omit the dependence in $n$ in the notation.
We have
\begin{equation}\begin{split}
 \bar P (i,i+1)&= \frac{2 (k-i)^2}{n^2},\\
 \bar P (i,i-1)&= \frac{2 i(n-2k+i)}{n^2},\\
 \bar P (i,i)&=   \frac{n^2-2[(k-i)^2+i(n-2k+i)]}{n^2}.
\end{split}\end{equation}

We end this section with a first simple Lemma giving the expectation of $W_t$. It will constantly be used in the sequel.

\begin{lemma}\label{expec}
 One has, for any value of $k$, for any $t$
\begin{equation}
 \bbE^{\xi}[W_t]=\left(W(\xi)-\frac{k^2}{n}\right) \left(1-\frac{2}{n}\right)^t+ \frac{k^2}{n}
\end{equation}

\end{lemma}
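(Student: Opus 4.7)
The plan is to derive a one-step recursion for $\bbE^{\xi}[W_{t+1}]$ in terms of $\bbE^{\xi}[W_t]$, observe that it is affine with contraction factor $1-2/n$ and fixed point $k^2/n$, and then iterate.

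First I would use the explicit transition kernel $\bar P$ recalled just above the lemma to write the conditional drift
\begin{equation*}
\bbE[W_{t+1}-W_t \mid W_t = i] \;=\; \bar P(i,i+1)-\bar P(i,i-1) \;=\; \frac{2}{n^{2}}\bigl[(k-i)^{2}-i(n-2k+i)\bigr].
\end{equation*}
The bracket expands and simplifies remarkably to $k^{2}-in$, so the conditional drift equals $\tfrac{2}{n}\bigl(\tfrac{k^{2}}{n}-i\bigr)$. This is the key algebraic fact; it already identifies $k^{2}/n$ (the $\pi$-mean of $W$) as the equilibrium value towards which the mean is being pulled.

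Taking conditional expectations and then expectations under $\bbP^{\xi}$, this yields the affine recursion
\begin{equation*}
\bbE^{\xi}[W_{t+1}] \;=\; \Bigl(1-\tfrac{2}{n}\Bigr)\,\bbE^{\xi}[W_{t}] \;+\; \tfrac{2k^{2}}{n^{2}}.
\end{equation*}
Setting $a_t := \bbE^{\xi}[W_t] - k^{2}/n$ turns this into the homogeneous recurrence $a_{t+1}=(1-2/n)\,a_t$, so $a_t = a_0(1-2/n)^t$ with $a_0 = W(\xi)-k^{2}/n$. Substituting back gives the claimed formula.

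I do not expect any real obstacle here: the only point requiring care is the algebraic simplification of $(k-i)^{2}-i(n-2k+i)$ to $k^{2}-in$, which is what makes the drift \emph{linear} in $i$ (and hence makes the recursion affine and exactly solvable). Everything else is a direct one-line induction.
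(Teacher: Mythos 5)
Your proof is correct and follows essentially the same route as the paper's: compute the one-step conditional drift from the transition kernel $\bar P$, observe the affine recursion with fixed point $k^2/n$ and contraction factor $1-2/n$, and iterate.
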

\begin{proof}
Using the jump rates we compute the expected value of $W_{t+1}$ given $W_t$.
One has
\begin{equation}
 \bbE^{\xi}[W_{t+1}|W_t]=W_t+ \bar P (W_t,W_t+1)-\bar P (W_t,W_t-1)= \frac{k^2}{n}+\left(W_t-\frac{k^2}{n}\right)\left(1-\frac{2}{n}\right).
\end{equation}
Taking the expectation on both sides, and making a trivial induction, one gets the desired result. 
\end{proof}

\section{The case $k(n)\ll n^{1/2}$} \label{petitk}

In this section we prove the main theorem with the assumptions that $\lim_{n\to \infty} k(n)/\sqrt{n}=0$ and $\lim_{n\to \infty} k(n)=\infty$.

\subsection{Upper bound on $\Tm$}\label{TMM}

Under the assumption that $\lim_{n\to\infty} \frac{k(n)}{\sqrt{n}}=0$, $\bar \pi(W=0)=1-o(1)$. Indeed, with this condition, the expectation of $W$ at equilibrium is
\begin{equation}
 \bar \pi(W)= \frac{k^2(n)}{n}=o(1).
\end{equation}

Let us choose $\gamma>0$ and set $t_{\gamma}=\frac{n}{2}\log k(n) + \gamma n$ (is has to be thought as the integer part, but we omit this in the notation to keep things simpler; at any rate it would not change the proof).
One has 
\begin{equation}
 \bar \mu_{t_{\gga}} (W)=k\left(1-\frac{2}{n}\right)^{\frac{n}{2}\log k + \gamma n}+o(1)
\end{equation}
Altogether we get that
\begin{equation}
\| \bar \mu_{t_{\gga}}-\bar \pi\|\le e^{-2\gamma}+o(1).
\end{equation}
And therefore
\begin{equation}
 \Tm(\gep)\le \frac{n}{2}\left(\log k(n) - \log \gep +o(1)\right). 
\end{equation}
\qed

\subsection{Lower bound on $\Tm$}\label{TMMMM}

To get the other bound, we make the following consideration:
the equilibrium measure $\bar\pi$ is concentrated on the event $\{W=0\}$.
Therefore, on the original exclusion process, every particle has to be moved at least once in order to be significantly close to equilibrium.
To formalize this properly, we present an alternative construction of the simple exclusion process.

Let $(X_t,Y_t)_{t \ge 1}$ be a sequence of i.i.d.\ random variables distributed uniformly on $\{1,\dots,n\}^2$ (we include this process in the probability law $\bbP$).
Under $\bbE^{\xi}$, we start from $\eta_0= \xi$ and we build $\eta_t$ from $\eta_{t-1}$ by interchanging the content of sites $X_t$ and $Y_t$ if $X_t\ne Y_t$:

\begin{equation}
 \eta_t(x):=\left\{
    \begin{array}{ll}
        \eta_{t-1}(x)& \text{ if }  x\notin  \{X_t,Y_t\}, \\
        \eta_{t}(Y_t)& \text{ if }  x=X_t,\\
	\eta_{t}(X_t)& \text{ if }  x=Y_t. 
    \end{array}\right.
\end{equation}

We define $\tau$ as the time were all the sites in $\{0,\dots,k(n)\}$ have been selected at least once by the process $(X,Y)$
\begin{equation}
 \tau:=\inf\left \{t\ge 0\ | \bigcup_{s=1}^t \{X_s,Y_s\}\supset \{1,\dots, k(n)\}\right\}.
\end{equation}
Notice that if $t<\tau$, then $W_t\ne 0$, so that 
\begin{equation}
\|\bar \mu_t-\gd_{W=0}\|\ge \bbP\left[\tau> t\right]
\end{equation}
Therefore one has
\begin{equation}\label{taut}
 \|\bar \mu_t-\bar \pi\|\ge \bbP\left[\tau> t\right]-o(1) 
\end{equation}

Estimating the time $\tau$ boils down to the so-called  coupon collector problem (see \cite{LPW},  Section 2.2 in particular).
Set $X'_{2s-1}:=X_s$ and $X'_{2s}=Y_s$. Then $X'_s$ is an i.i.d.\ sequence
One has the following equality in law
\begin{equation}\label{taut2}
 \tau=\lceil\tau'/2\rceil=\left\lceil \frac{1}{2}\sum_{i=1}^k \mathcal E_i\right\rceil.
\end{equation}
where
\begin{equation}
 \tau':=\inf\left \{t\ge 0\ | \{X'_s, s=1,\dots, t \}\supset \{1,\dots, k(n)\}\ \right\}.
\end{equation}
and the $\mathcal E_i$ are defined by
\begin{equation}
 \sum_{j=1}^i \mathcal E_j := \inf\left \{t\ge 0\ \big| \#(\{X'_s | s=1,\dots,t\}\cap [1,k(n)])=i \right\}.
\end{equation}
It is not difficult to check that $(\mathcal E_i)_{i\in [1,k]}$  are independent geometric variables of mean $(\frac{n}{k-i+1})_{i\in[1,k]}$. From this, one gets the following moment estimates: for some constant $C$
\begin{equation}\begin{split}
 \bbE\left[\sum_{i=1}^k \mathcal E_i\right]&\ge n\log k-Cn ,\\
 \var_{\bbP}\left[\sum_{i=1}^k \mathcal E_i\right]&= \sum_{i=1}^k \frac{1-(i/n)}{(i/n)^2}\le Cn^2.
\end{split}\end{equation}
Therefore, if one chooses $t_{\gamma}:=\frac{1}{2}n\log k-\gamma n$ (suppose that this in an integer), one has by classical second moment inequality:
\begin{equation}
 \bbP\left[ \sum_{i=1}^k \mathcal E_i \le 2t_{\gamma}\right]\le \frac{C}{(2\gamma-C)^2}
\end{equation}
And therefore from \eqref{taut} and \eqref{taut2}
\begin{equation}
 d(t_{\gamma})\ge 1- \frac{C}{(2\gamma-C)^2}-o(1),
\end{equation}
and hence, for any $\gep>0$
\begin{equation}
  \Tm(1-\gep)\ge \frac{n}{2}\left(\log k(n)-\sqrt{C/\gep}-C+o(1)\right).
\end{equation}
(One could get a tighter bound with $\log \gep$ instead of $-\gep^{-1/2}$ by using exponential moments instead of second moment).
\qed

\section{Bounds for the labeled process}\label{Labelpro}
The methods of the previous section can be applied for the proof of Theorem \ref{auxres}.

For the lower-bound we remark that at equilibrium, for any $i\in \{1,\dots,k\}$, $\pi(\eta(i)=i)=\frac{1}{n}$. Thus in every case the expected number 
of fixed points is less than one:  $\pi(\#\{i | \eta(i)=i\})\le 1$, and for any integer $K$
\begin{equation}
 \pi(\#\{i | \eta(i)=i\}\ge K)\le 1/K.
\end{equation}
One constructs the exclusion process from $(X_t,Y_t)_{t\ge 0}$ as in the previous section. 
We define $\tau$ as the first time at which all of the $k$ first sites have been selected.
\begin{equation}
 \tau:=\inf\left \{t\ge 0\ |\ \#((\bigcup_{s=1}^t \{X_s,Y_s\})\cap\{1,\dots, k(n)\})\ge k-K \right\}.
\end{equation}
If one starts the process from
\begin{equation}
 (\eta_0)(x):=\left\{
    \begin{array}{ll}
       x & \text{ if }  x\in [1, k(n)] \\
       0 & \text{if not}
    \end{array}\right.
\end{equation} 
One has
\begin{equation}
 \mu_t^{\eta_0}(\#\{i | \eta(i)=i\}\ge K)\ge \bP(t>\tau)
\end{equation}
and therefore
\begin{equation}
 \|\mu_t^{\eta_0}-\pi\|\ge \bP(t>\tau)-\frac{1}{K}.
\end{equation}
Taking the same definition for $\mathcal E_i$ as in the previous section one has
\begin{equation}
 \tau= \lceil \sum_{i=1}^{k-K}\mathcal E_i \rceil, 
\end{equation}
and $\mathcal E_i$ are independent geometric variables of respective mean $\left(\frac{n}{k-i-n}\right)$.
One has:
\begin{equation}\begin{split}
 \bbE\left[\sum_{i=1}^{k-K} \mathcal E_i\right]&\ge n\log k-n\log K-Cn ,\\
 \var_{\bbP}\left[\sum_{i=1}^k \mathcal E_i\right]&= \sum_{i=K}^{k} \frac{1-(i/n)}{(i/n)^2}\le Cn^2/K,
\end{split}\end{equation}
and therefore, using Chebychev inequality one gets:
\begin{equation}
 P(\tau \le \frac{n}{2}\log k -n \log K)\le \frac{C}{K(\log K-C)^2}.
\end{equation}
Overall, this gives that for $K$ sufficiently large and for $t= \frac{n}{2}\log k -n \log K$:
\begin{equation}
  \|\mu_t^{\eta_0}-\pi\|\ge 1-\frac{2}{K}.
\end{equation}
which gives
\begin{equation}
 \Tm(1-\gep)\ge \frac{n}{2}\log k + n\log \gep/2.  
\end{equation}

For the upper bound, we assume that $\lim_{n\to \infty} k(n)/\sqrt{n}=0$. We notice that as for the unlabeled process, the distance to 
equilibrium 
is the same for every starting position by symmetry.
\begin{equation}
d(t)=\|\mu_t^{\eta_0}-\pi\|.
\end{equation}
Let $W$ be the number of particle lying on the vertices $\{1,\dots,k\}$. ($W(\eta):=\sum_{x=1}^k \ind_{\eta(x)\ne 0}$).
Now notice that for every $t\ge k$ one has
\begin{equation}
\mu_t^{\eta_0}(\cdot |W=0)= \pi(\cdot |W=0),
\end{equation}
as the initial condition and the dynamics are invariant under permutation of $\{k(n)+1,\dots,n\}$.
Therefore the same analysis as in section \ref{TMM} gives
\begin{equation}
 \Tm(\gep)\le \frac{n}{2}\left(\log k(n) - \log \gep +o(1)\right). 
\end{equation}

\section{The cases $k(n)\gg\sqrt{n}$ and $k(n)\approx \sqrt{n}$}\label{grandk}

In this section we prove the main theorem with the assumption that either\\
$\lim_{n\to \infty} {k(n)}/{n}=\infty$ or
$\lim_{n\to \infty} {k(n)}/{n}=l>0$. The latter case is a bit more complicated than the other as the distribution of $W$ at
equilibrium is asymptotically non-degenerate.
One makes use of second moment arguments for the lower bound of the mixing time, and  a diffusion argument to get an upper bound.

\subsection{Lower bound on the mixing time}
In this section we work with the weaker assumption $\liminf_{n\to \infty} k(n)/\sqrt{n}>0$.
To get the right bound on the mixing time one uses a second moment method.
The first and essential step is to compute a tight estimate of $\bar\mu_t\left[W^2 \right]$.
We start by writing an explicit formula for the second moment of $W_t$.
\begin{lemma}
\begin{multline}
 \bbE[W_t^2]= \left(1-\frac{4}{n}+\frac{4}{n^2}\right)^t k^2\\+
\sum_{s=0}^{t-1}\left[\left(\frac{4k^2}{n^2}-\frac{8k}{n^2}+\frac{2i}{n}\right)\bbE[W_s]+\frac{2k^2}{n}\right] \left(1-\frac{4}{n}+\frac{4}{n^2}\right)^{t-1-s}.
\end{multline}
\end{lemma}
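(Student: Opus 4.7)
The approach is direct: compute the one-step conditional second moment, derive an affine recursion for $\bbE[W_t^2]$, and then unroll it.

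First I would compute $\bbE[W_{t+1}^2 \mid W_t]$ using the transition rates $\bar P(W_t, W_t+1) = \tfrac{2(k-W_t)^2}{n^2}$ and $\bar P(W_t, W_t-1) = \tfrac{2W_t(n-2k+W_t)}{n^2}$ recalled at the end of Section~\ref{badc}. Writing $W_{t+1}^2 = W_t^2 + (2W_t+1)\ind_{W_{t+1}=W_t+1} + (-2W_t+1)\ind_{W_{t+1}=W_t-1}$, one gets
\begin{equation*}
\bbE[W_{t+1}^2 \mid W_t] = W_t^2 + \frac{2}{n^2}\Bigl[(2W_t+1)(k-W_t)^2 + (1-2W_t)W_t(n-2k+W_t)\Bigr].
\end{equation*}
Expanding the bracket, the cubic terms in $W_t$ cancel (as they must, by general principles for birth--death chains with this structure), leaving an affine function of $W_t$ and $W_t^2$.

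Collecting the coefficients carefully, the $W_t^2$ coefficient becomes $\tfrac{2}{n^2}(2-2n)$, so the prefactor of $W_t^2$ equals $1 - \tfrac{4}{n} + \tfrac{4}{n^2}$; the coefficient of $W_t$ becomes $\tfrac{2}{n^2}(2k^2 - 4k + n) = \tfrac{4k^2}{n^2} - \tfrac{8k}{n^2} + \tfrac{2}{n}$; and the constant is $\tfrac{2k^2}{n^2}$. Taking unconditional expectations gives the affine recursion
\begin{equation*}
\bbE[W_{t+1}^2] = \alpha\,\bbE[W_t^2] + \beta\,\bbE[W_t] + \gga,
\end{equation*}
with $\alpha := 1 - \tfrac{4}{n} + \tfrac{4}{n^2}$, $\beta := \tfrac{4k^2}{n^2} - \tfrac{8k}{n^2} + \tfrac{2}{n}$, and $\gga := \tfrac{2k^2}{n^2}$.

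Finally I would iterate: with the chosen initial condition $W_0 = k$ deterministic (as fixed in Section~\ref{badc}), one has $\bbE[W_0^2] = k^2$, and a trivial induction on $t$ yields
\begin{equation*}
\bbE[W_t^2] = \alpha^t k^2 + \sum_{s=0}^{t-1} \alpha^{t-1-s}\bigl(\beta\,\bbE[W_s] + \gga\bigr),
\end{equation*}
which is the claimed formula (noting that the $\tfrac{2i}{n}$ and the additive $\tfrac{2k^2}{n}$ printed in the statement should read $\tfrac{2}{n}$ and $\tfrac{2k^2}{n^2}$; these are minor typos, as no other term in the recursion can produce those denominators). The only non-routine part is the cancellation of the cubic terms; everything else is mechanical expansion and a one-line induction. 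I expect the main obstacle to be purely algebraic bookkeeping in expanding $(2W_t+1)(k-W_t)^2 + (1-2W_t)W_t(n-2k+W_t)$, which I would double-check by evaluating at a couple of simple values of $W_t$.
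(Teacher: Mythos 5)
Your proof is correct and follows essentially the same route as the paper: compute $\bbE[W_{t+1}^2\mid W_t]$ from the one-step transition probabilities, observe that the cubic terms in $W_t$ cancel so that the recursion is affine in $(\bbE[W_t^2],\bbE[W_t])$, and then unroll. Your algebra is right, and you also correctly spot the two typographical errors in the displayed formula: the coefficient written as $\frac{2i}{n}$ should be $\frac{2}{n}$ (the stray $i$ is a leftover from the pointwise computation), and the additive constant should be $\frac{2k^2}{n^2}$, not $\frac{2k^2}{n}$. (The intermediate recursion displayed in the paper's proof has yet a third misprint, $\frac{k^2}{n}$, for this same constant; the correct value is indeed $\frac{2k^2}{n^2}$ as you found.) These typos are harmless downstream since they contribute only $O(k^2/n)$ to the variance estimate that uses this lemma.
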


\begin{proof}
 One simply uses the transition of the Markov chain to get a recurrence relation
\begin{equation}
 \bbE[W_{t+1}^2|W_t]= W_t^2+ 2W_t(\bar P(W_t,W_t+1)-\bar P(W_t,W_t-1))+\bar P(W_t,W_t+1)+\bar P(W_t,W_t-1).
\end{equation}
By taking the expectation on both sides, one gets the following recursive relation
\begin{equation}
 \bbE(W^2_{t+1})=\left(1-\frac{4}{n}+\frac{4}{n^2}\right)\bbE[W_t^2]+ \left(\frac{4k^2}{n^2}-\frac{8k}{n^2}+\frac{2i}{n}\right)\bbE [W_t]+\frac{k^2}{n}.
\end{equation}
which after induction gives the expected result.
\end{proof}

Then, using the formula above, we get a clean bound on the variance of $W_t$.

\begin{lemma}
 There exists a constant $C$ such that for any $K$, $n$ large enough (depending on $K$), and $
t= \frac{n}{4}\log n-\frac{\gga n}{2}$, $\gga\in [-K,K]$
\begin{equation}
 \var_{\bbP} (W_t^2)\le \frac{Ck^2}{n}+\frac{e^{\gga}k}{\sqrt{n}}.
\end{equation}
\end{lemma}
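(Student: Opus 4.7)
The plan is to bypass the closed-form expression for $\bbE[W_t^2]$ and instead derive a clean first-order recursion for the variance $V_t:=\var_\bbP(W_t)$. Starting from the one-step recurrence proved in the previous lemma,
\begin{equation*}
\bbE[W_{t+1}^2]=q^2\bbE[W_t^2]+B\,\bbE[W_t]+C,
\end{equation*}
with $q:=1-2/n$, $B=4k^2/n^2-8k/n^2+2/n$, $C=2k^2/n^2$, and from the recurrence $\bbE[W_{t+1}]=q\,\bbE[W_t]+2k^2/n^2$ contained in Lemma~\ref{expec}, I would subtract the square of the latter from the former. The key algebraic observation that makes things work is the simplification
\begin{equation*}
B-\tfrac{4qk^2}{n^2}=\tfrac{2}{n}(1-2k/n)^2,
\end{equation*}
which should be checked by direct expansion. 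This yields the clean recursion
\begin{equation*}
V_{t+1}=q^2V_t+\tfrac{2}{n}(1-2k/n)^2\,\bbE[W_t]+C-\tfrac{4k^4}{n^4}.
\end{equation*}

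Next I would identify the stationary variance $V_\infty$ as the fixed point of this recursion. Using $1-q^2=(4/n)(1-1/n)$, a short calculation gives $V_\infty=k^2(n-k)^2/\bigl(n^2(n-1)\bigr)$, which one recognizes as the variance of the hypergeometric distribution $\bar\pi$ (as it must be), and in particular $V_\infty\le Ck^2/n$ for $k\le n/2$. Setting $\tilde M_t:=\bbE[W_t]-k^2/n=(k-k^2/n)q^t$ by Lemma~\ref{expec}, and $U_t:=V_t-V_\infty$, the stationary terms cancel and the recursion collapses to a geometric-forcing equation
\begin{equation*}
U_{t+1}=q^2U_t+\tfrac{2}{n}(1-2k/n)^2\,\tilde M_t,\qquad U_0=-V_\infty.
\end{equation*}
A single geometric sum, $\sum_{s=0}^{t-1}q^s q^{2(t-1-s)}=q^{t-1}(1-q^t)/(1-q)$, then gives the explicit solution
\begin{equation*}
V_t=V_\infty(1-q^{2t})+(1-2k/n)^2\,(k-k^2/n)\,q^{t-1}(1-q^t).
\end{equation*}

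To finish, I would plug in $t=\tfrac{n}{4}\log n-\tfrac{\gga n}{2}$. Since $\log(1-2/n)=-2/n+O(1/n^2)$, one has $q^t=e^{\gga}/\sqrt{n}\,(1+o(1))$ uniformly in $\gga\in[-K,K]$, so for $n$ large enough depending on $K$ the transient term is bounded by $ke^{\gga}/\sqrt{n}$ times a factor arbitrarily close to $1$, while $V_\infty(1-q^{2t})\le Ck^2/n$. Combining the two estimates yields the announced bound, after absorbing harmless $1+o(1)$ factors into the constant. The only non-routine step is the algebraic simplification $B-4qk^2/n^2=(2/n)(1-2k/n)^2$ in the first paragraph; once this is in hand the argument reduces to summing one geometric series, and the second term in the bound is seen to be essentially the mean excess $\tilde M_t$ itself, a natural quantity for this birth-and-death chain.
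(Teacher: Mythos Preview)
Your argument is correct and takes a genuinely different route from the paper. The paper works directly with the closed-form sum for $\bbE[W_t^2]$ given in the preceding lemma: it expands $[\bbE W_t]^2$ and $\bbE[W_t^2]$ separately, splitting the latter into three pieces (and one of those into two more), and carries out term-by-term asymptotic estimates at $t=\tfrac{n}{4}\log n-\tfrac{\gga n}{2}$ until all the leading terms cancel and only $O(k^2/n)+e^{\gga}k/\sqrt n$ survives. Your approach instead subtracts the recursions before summing, and the algebraic identity $B-4qk^2/n^2=\tfrac{2}{n}(1-2k/n)^2$ (which I checked) turns the variance recursion into a single affine equation with geometric forcing. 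Identifying the fixed point as the hypergeometric variance $V_\infty=k^2(n-k)^2/(n^2(n-1))$ then lets you read off the exact formula $V_t=V_\infty(1-q^{2t})+(1-2k/n)^2(k-k^2/n)\,q^{t-1}(1-q^t)$, from which the bound follows with essentially no asymptotic bookkeeping. What your approach buys is an exact, structurally transparent formula showing that the variance is the stationary variance plus a multiple of the mean excess $\tilde M_t$; what the paper's approach buys is that it needs no preliminary algebraic identity, just patience. One small remark: since $q^{t-1}=q^t/q$ picks up a factor $1+O(1/n)$, the overflow beyond $e^{\gga}k/\sqrt n$ is $O(e^K k/n^{3/2})$, which is indeed absorbed by $Ck^2/n$ once $n$ is large enough under the standing hypothesis $\liminf k/\sqrt n>0$ of this section, so your final ``absorbing harmless $1+o(1)$ factors'' step is legitimate.
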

\begin{proof}
Suppose that $t=\frac{n}{4}(\log n-2\gamma)$ for some $\gamma \in [-K,K]$. All the $O(\cdot)$ are uniform in $\gamma \in [-K,K]$ when $n$ is large enough.
We start by giving an estimate of the expectation squared
\begin{multline}
 [\bbE(W_t)]^2= \left[\frac{k^2}{n}+\left(k-\frac{k^2}{n}\right)\left(1-\frac{1}{2n}\right)^t\right]^2\\
=\left[\frac{k^2}{n}+\left(k-\frac{k^2}{n}\right)\frac{e^{\gamma}}{\sqrt{n}}\left(1+O\left(\frac{\log n}{n}\right)\right)\right]^2
\\=\frac{k^4}{n^2}+\frac{2k^2}{n^{3/2}}\left(k-\frac{k^2}{n}\right)e^{\gamma}+n^{-1}\left(k-\frac{k^2}{n}\right)^2e^{2\gamma}+O(k^2/n).
\end{multline}
Estimating $\bbE(W_{t}^2)$ is a bit more tricky. For practical reasons we divide it in three terms
\begin{multline}
 \bbE(W_{t}^2)=\left(1-\frac{4}{n}+\frac{4}{n^2}\right)^t k^2
+\frac{2k^2}{n^2}\sum_{s=0}^{t-1}\left(1-\frac{4}{n}+\frac{4}{n^2}\right)^{t-1-s}\\
+\sum_{s=0}^{t-1}\left(\frac{4k^2}{n^2}-\frac{8k}{n^2}+\frac{2}{n}\right) \bbE(W_s)\left(1-\frac{4}{n}+\frac{4}{n^2}\right)^{t-1-s}
\end{multline}
The first one gives
\begin{equation}
\left(1-\frac{4}{n}+\frac{4}{n^2}\right)^t k^2=\frac{k^2e^{2\gamma}}{n}+O(k^2/n).
\end{equation}
The second:
\begin{equation}
 \frac{2k^2}{n^2}\sum_{s=0}^{t-1}\left(1-\frac{4}{n}+\frac{4}{n^2}\right)^{t-1-s}=O(k^2/n).
\end{equation}
We divide the third term it into two contributions
\begin{multline}
\sum_{s=0}^{t-1}\left(\frac{4k^2}{n^2}-\frac{8k}{n^2}+\frac{2}{n}\right)\bbE(W_s)\left(1-\frac{4}{n}+\frac{4}{n^2}\right)^{t-1-s}\\
= \left(\frac{4k^2}{n^2}-\frac{8k}{n^2}+\frac{2}{n}\right)\left(k-\frac{k^2}{n}\right)\left(1-\frac{4}{n}+\frac{4}{n^2}\right)^{t-1}
\sum_{s=0}^{t-1} \left(\frac{1-\frac{2}{n}}{1-\frac{4}{n}+\frac{4}{n^2}}\right)^s\\
+\left(\frac{4k^2}{n^2}-\frac{8k}{n^2}+\frac{2}{n}\right)\frac{k^2}{n}\sum_{s=0}^{t-1}\left(1-\frac{4}{n}+\frac{4}{n^2}\right)^{s}.
\end{multline}
The first contribution can be estimated as follows
\begin{multline}
 \left(\frac{4k^2}{n^2}+\frac{2}{n}+O(k/n^2)\right)\left(k-\frac{k^2}{n}\right)\frac{e^{2\gga}}{n}\left(1+O\left(\frac{\log n}{n}\right)\right)\frac{\sqrt{n}e^{-\gga}-1+O(n^{-1/2}\log n)}{\frac{2}{n}+O(n^{-2})}\\
=\frac{2k^2}{n^3/2}\left(k-\frac{k^2}{n}\right)e^{\gga}-\frac{2k^2}{n^{2}}\left(k-\frac{k^2}{n}\right)e^{2\gga}+\frac{e^{\gga}k}{\sqrt{n}}+O(k^2/n).
\end{multline}
The second is equal to
\begin{equation}
 \left(\frac{4k^2}{n^2}+O(1/n)\right)\frac{k^2}{4}\left(1-\frac{e^{2\gga}}{n}+O\left(\frac{1}{n}\right)\right)=\frac{k^4}{n^2}-e^{2\gga}\frac{k^4}{n^3}+O(k^2/n).
\end{equation}
Summing everything up gives the expected result.
\end{proof}

Now we use the bounds that we have on the second and first moment to bound the mixing time.
Set $t=\frac{n}{4}(\log n-2\gamma)$.
Let $(W_1, W_2)$ be a maximal coupling between $\bar \mu_t$ and $\bar \pi$, and let $\bar \nu_t$ be its law (such that $\|\bar\mu_t-\bar \pi\|=\bar\nu_t(W_1\ne W_2)$).

One has
\begin{multline}
  \bar \nu_t(W_1-W_2)^2=\bar \nu_t\{W_1\ne W_2\} \bar \nu_t\left((W_1-W_2)^2\ |\ W_1\ne W_2\right)\\
\ge \bar \nu_t\{W_1\ne W_2\}  \left[\bar\nu_t\left((W_1-W_2)\ |\ W_1\ne W_2\right)\right]^2=\frac{\left(\nu_t(W_1-W_2)\right)^2}{\|\bar\mu_t-\bar \pi\|}.
\end{multline}
And hence
\begin{equation}
 \|\bar\mu_t-\bar \pi\| \ge  \frac{[\nu_t(W_1-W_2)]^2}{\bar \nu_t(W_1-W_2)^2}=\frac{1}{1+\frac{\var_{\bar \nu_t}(W_1-W_2)}{[\nu_t(W_1-W_2)]^2}}.
\end{equation}
It is then easy to compute the moments. The expectation is given by
\begin{equation}
 \bar \nu_t (W_1-W_2)=\bar\mu_t( W)-\bar\pi( W) = e^{\gga}\frac{1}{\sqrt{n}}\left(k-\frac{k^2}{n}\right)(1+o(1)).
\end{equation}
For the variance, first notice that:
\begin{multline}
 \var_{\bar \pi}(W)=\sum_{i,j=1}^k \bar \pi ( \eta(i) \eta(j))-\frac{k^4}{n^2}= k\bar\pi(\eta(1))+ k(k-1)\bar \pi (\eta(1)\eta(2))-\frac{k^4}{n^2}\le \frac{k^2}{n}.
\end{multline}
And therefore
\begin{equation}
  \var{\bar \nu_t}(W_1-W_2)^2\le 2 [\var_{\bar \mu_t}(W)+\var_{\bar \pi}(W)]
 \le C' \frac{k^2}{n}+\frac{e^{\gga}k}{\sqrt{n}}.
\end{equation}
Hence one gets 
\begin{equation}
 \|\bar\mu_t-\bar \pi\|\ge \left(1+\frac{C' \frac{k^2}{n}+\frac{e^{\gga}k}{\sqrt{n}}}{\frac{e^{2\gga}}{n}\left(k-\frac{k^2}{n}\right)^2
(1+o(1))}\right)^{-1}\!\!\!
\ge \! \left(1+4C'e^{-2\gga}+\frac{4\sqrt{n}}{k}e^{-\gga}+o(1)\right)^{-1}\!\!\!\!.
\end{equation}
where in the last line one used $k\le \frac{n}{2}$.
Using the assumption that $\liminf k(n)/\sqrt {n}>0$, one obtains that for $n$ large enough
\begin{equation}
  \|\bar\mu_t-\bar \pi\|\ge \left(1+C''\max(e^{-2\gga},e^{-\gga})+o(1)\right)^{-1}
\end{equation}
Therefore
\begin{equation}
\Tm(\gep)\ge \left\{\begin{array}{ll}\frac{n}{4}\log n-\frac{n}{4}\log \left(\frac{C''}{\gep^{-1}-1}\right) &\text{ if } \gep\ge (1+C'')^{-1}\\
  \frac{n}{4}\log n+\frac{n}{2}\log \left(\frac{\gep^{-1}-1}{C''}\right) &\text{ if } \gep\le (1+C'')^{-1}
 \end{array}\right.
 \end{equation}

\subsection{Upper bound on $\Tm$}

To give an upper bound on the mixing time, we bound the following quantity
\begin{equation}
\bar d (t)= \max_{x,y\in[0,k]^2}\|\bar\mu^{x}_t-\bar \mu^{y}_t\|\ge d(t).
\end{equation}
We define a coupling of two replicas of the  Markov chain $W$ starting from different states as follows: let $(W^{(1)}_t,W_t^{(2)})$ be the Markov chain on $\{1,\dots,k\}^2$
given by the following transition
\begin{equation}\begin{split}
\bP((i,j),(i\pm 1,j))&:=\bar P(i,i\pm 1), \text{ if } i\ne j, \\
\bP((i,j),(i,j\pm 1))&:=\bar P(i,j\pm 1), \text{ if } i\ne j, \\
\bP((i,i),(i\pm 1,i\pm 1))&:= \bar P((i,i\pm 1), \\
\bP((i,j),(i,j))&:= \bar P(i,i)+\bar P(j,j)-1,\\
\bP((i,i),(i,i))&:= \bar P(i,i),\\
\end{split}\end{equation}
where all the other transitions have zero probability. One can check that the coefficients are positive and that this
 indeed defines a stochastic matrix.
This coupling has the property that once $W^{(1)}$  and $W^{(2)}$ merge, they stay together. Moreover, before the merging, at most one of the 
two coordinates changes at each time step, and therefore the sign of $W^{(1)}-W^{(2)}$ 
is constant in time (\textit{i.e.} $W^{(1)}$  and $W^{(2)}$ cannot cross without merging).
With our choice for initial condition, in the sequel it is always non-negative.

If one denotes by $\bP^{x,y}$ the law of $(W^{(1)}_t,W_t^{(2)})$ with initial condition $(x,y)$, $x\ge y$ and defines
\begin{equation}
 \tau:=\inf\{t>0\ | \ W^{(1)}_t=W_t^{(2)}\},
\end{equation}
then
\begin{equation}
 \|\bar\mu^{x}_t-\bar \mu^{y}_t\|\le \bP^{x,y} \left[\tau>t\right].
\end{equation}
Set $D_t:=W^{(1)}_t-W^{(2)}_t\ge 0$. According to Lemma \ref{expec}
\begin{equation}
 \bE^{x,y}[D_t]=(x-y)\left(1-\frac{2}{n}\right)^{t}.
\end{equation}
A first moment analysis is enough to treat the case $\lim_{n\to\infty} k(n)/\sqrt{n}=l$.
Indeed for any $x$ and $y$
\begin{equation}
\bP^{x,y} \left[\tau>t\right]= \bP^{x,y} \left[D_t \ge 1 \right]\le  \bE^{x,y}[D_t]\le l \sqrt{n}(1+o(1))\left(1-\frac{2}{n}\right)^{t}.
\end{equation}
Setting $t_{\gb}=\frac{1}{4}n\log n+ \gb n$ one gets 
\begin{equation}
 \bP^{x,y} \left[\tau>t_{\gb}\right]= l e^{-2\gb}(1+o(1)).
\end{equation}
and therefore
\begin{equation}
 \Tm(\gep)\le \frac{1}{4}n\log n-\frac{n}{2}\log (\gep/l).
\end{equation}

The rest of the section is therefore devoted to the case $\lim_{n\to \infty} k(n)/\sqrt{n}= \infty$.

Given that $W^{(1)}_t>W^{(2)}_t$, $D_t$ has the following transition probabilities
\begin{equation}\begin{split}
 \bP(D_{t+1}=D_{t}-1\ | \ W_t^{(1)}, W_t^{(2)} )&=\bP(W^{(1)}_t,W^{(1)}_t-1)+\bP(W^{(2)}_t,W^{(2)}_t+1)\\ &\quad =2\frac{W_t^{(1)}(n-2k+W^{(1)}_t) +(k-W^{(2)}_t)^2}{n^2},\\
 \bP(D_{t+1}=D_{t}+1\ | \  W_t^{(1)}, W_t^{(2)})&=\bP(W^{(1)}_t,W^{(1)}_t+1)+\bP(W^{(2)}_t,W^{(2)}_t-1)\\ &\quad=2\frac{(k-W^{(1)}_t)^2+ W_t^{(2)}(n-2k+W^{(2)}_t)}{n^2}. 
\end{split}\end{equation}
One can check that the evolution of $D$ is not Markovian (it depends on the values of $W_t^{(1)}$ and $W_t^{(2)}$ and not only on $D_t$). This makes the analysis of $\tau$ difficult.
\medskip
 We now sketch the method we use to tackle this problem:
\begin{itemize} 
 \item First, we use a first moment method to show that after a time $t_0:=\frac{n}{4}\log n$, with probability close to one
 $W_{t_0}^{(1)}-W_{t_0}^{(2)}$ is of order $k/\sqrt{n}$.
 \item Then, we do a sequence of stochastic comparisons to state that starting from $W_{t_0}^{(1)}-W_{t_0}^{(2)}\le k/\sqrt{n}$, $\tau$ is
stochastically dominated by ($t_0$ plus) the hitting time of zero for a simple symmetric random walk on $\Z$ with jump rate $k^2/n^2$ .
 \item Finally, we use a reflection argument to show that the typical time for hitting zero when starting from $k/\sqrt{n}$ for such
a walk is of order $n$. Altogether this gives that $\tau$ is smaller than $t_0+Kn$ with probability close to one if $K$ is sufficiently large.
\end{itemize}
The idea of combining a first moment method and diffusion in order to evaluate mixing times has already been used to compute the mixing time of the mean field Ising model in 
\cite{LLP}, the method was then refined in \cite{DLP2}. The interest of the method here lies in the particular manner the coupling has to be constructed.

Set $t_{\alpha}= \frac{n}{4}\log n+\alpha n$. One uses Markov property to get the following bound on $\tau$.

\begin{multline}
 \bP^{x,y} \left(\tau>t_{\alpha+1}\right)\le \bE^{x,y}\left[ \bP^{W^{(1)}_{t_\ga},W^{(2)}_{t_\ga}}\left(\tau> n\right)\right]\\
\le \bP^{x,y}[W^{(1)}_{t_\ga}-W^{(2)}_{t_\ga}\le M]+ \max_{x'\ge y', x'-y'\le M} \bP^{x',y'}[\tau>n]\\
\le \frac{\bE^{x,y}[W^{(1)}_{t_\ga}-W^{(2)}_{t_\ga}]}{M}+ \max_{x'\ge y', x'-y'\le M} \bP^{x',y'}[\tau>n].
\end{multline}
We apply it for $M=\frac{k e^{-\alpha}}{\sqrt{n}}$. As we have
\begin{equation}
\bE^{x,y}[W^{(1)}_{t_\ga}-W^{(2)}_{t_\ga}]=(x-y)\left(1-\frac{2}{n}\right)^{t_{\alpha}}\le \frac{k}{\sqrt{n}}e^{-2\alpha},
\end{equation}
this gives
\begin{equation}\label{orib}
 \bar d (t_{\alpha+1})\le e^{-\alpha}(1+o(1))+ \max_{x'\ge y', x'-y'\le \frac{k e^{-\alpha}}{\sqrt{n}}} \bP^{x',y'}[\tau>n],
\end{equation}
and all that remains to do is estimating the second term.
The next step is to show that 
\begin{equation}\label{comparf}
 \bP^{x',y'}[\tau>n]\le Q^{x'-y'}[\tau'>n]
\end{equation}
where $\tau'$ is the first hitting time of zero for a nearest neighbor symmetric random walk on $\bbZ$ with ``jump rate'' 
$n/k^2$, starting from $x'-y'$ (law $Q^{x'-y'}$).
The result is rather intuitive, as $D_t=W^{(1)}_t-W^{(2)}_t$ has a drift towards zero and the probability of jumping is bounded from below
by $k^2/n^2$. However, stochastic comparisons have to be made with some care in order to prove the result rigorously. We construct a coupling explicitly.

We define $J_i$ the sequence of moving time for $(W^{(1)}, W^{(2)})$ \textit{i.e.} $J_0=0$ and for $i\ge 0$
\begin{equation}
 J_{i+1}:=\inf\{t\ge J_{i}\ | \ (W^{(1)}_t, W^{(2)}_t)\ne (W^{(1)}_{J_i}, W^{(2)}_{J_i}) \}.
\end{equation}
We remark that $(W^{(1)}_{J_i}, W^{(2)}_{J_i})_{i\ge 0}=(\bar W^{(1)}_i, \bar W^{(2)}_i)_{i\ge 0}$ 
is itself a Markov chain (something similar to the skeleton of a continuous time/discrete space Markov chain), 
and that conditionally to $(W^{(1)}_{J_i}, W^{(2)}_{J_i})_{i\ge 0}$, $(J_{i+1}-J_{i})_{i\ge 0}$, it is 
a sequence of geometric variables of mean $(2-\bar P(W^{(1)}_{J_i},W^{(1)}_{J_i})-\bar P(W^{(2)}_{J_i},W^{(2)}_{J_i}))^{-1}$ 
if $W^{(1)}_{J_i}\ne W^{(2)}_{J_i}$, and of mean $(1-\bar P(W^{(1)}_{J_i},W^{(1)}_{J_i}))^{-1}$ if the processes have merged.

Let $(U_i)_{i\ge 0}$ and $(U'_i)_{i\ge 0}$ be two independent sequences of i.i.d.\ random variables. One constructs the process 
$(W^{(1)}_t, W^{(2)}_t)_{t\ge 0}$ starting from $(x,y)$ deterministically from the sequences $(U_i)_{i\ge 0}$ and 
$(U'_i)_{i\ge 0}$ as follows (for the sake of simplicity, we do not give details of how the construction is done after the merging as 
we do not use it):

\begin{itemize}
 \item First, one constructs recursively $(\bar W^{(1)}_i, \bar W^{(2)}_i)_{i\ge 0}$,   $(\bar W^{(1)}_0, \bar W^{(2)}_0)= (x,y)$ and
\begin{equation}
(\bar W^{(1)}_{i+1}, \bar W^{(2)}_{i+1}):=\left\{
    \begin{array}{ll}
        (\bar W^{(1)}_{i}+1, \bar W^{(2)}_{i})& \text{ if } U'_{i+1} \in [0,a(\bar W^{(1)}_{i},  W^{(2)}_{i})]  \\
        (\bar W^{(1)}_{i}, \bar W^{(2)}_{i}-1)& \text{ if } U'_{i+1} \in (a(\bar W^{(1)}_{i},  W^{(2)}_{i}),b(\bar W^{(1)}_{i},  W^{(2)}_{i}))] \\
	(\bar W^{(1)}_{i}-1, \bar W^{(2)}_{i})& \text{ if } U'_{i+1} \in (b(\bar W^{(1)}_{i},  W^{(2)}_{i}),c(\bar W^{(1)}_{i},  W^{(2)}_{i}))] \\
        (\bar W^{(1)}_{i}, \bar W^{(2)}_{i}+1)& \text{ if } U'_{i+1} \in (c(\bar W^{(1)}_{i},  W^{(2)}_{i}),1]
    \end{array}\right.
\end{equation}
where $a(i,j)$, $b(i,j)$, $c(i,j)$ are chosen such that the chain has the right transition probabilities. It is important to notice 
that $b(i,j)\le 1/2$ for all $i,j$. 
\item Then, given $(\bar W^{(1)}_i, \bar W^{(2)}_i)_{i\ge 0}$ we construct the sequence of moving times: $J_0=0$ and 
\begin{equation}
  J_{i+1}-J_{i}=m \   \text{ if } \  U_{i+1} \in \left[1-(1-q( \bar W^{(1)}_{i}, 
\bar W^{(2)}_{i}) )^{m-1},1-(1-q(\bar W^{(1)}_{i-1}, \bar W^{(2)}_{i-1})^{m}\right),
\end{equation}
where $q(i,j):= (2-\bar P(i,i)-\bar P(j,j))\ge k^2/n^2$ is the inverse of the mean jumping time from $(i,j)$.  
\end{itemize}
Using the same variables $(U_i)_{i\ge 0}$ and $(U'_i)_{i\ge 0}$, one constructs a simple random walk on $\Z$.
\begin{itemize}
 \item First, one defines $\bar X_i$ as
\begin{equation}
 \bar X_i=(x-y)+\sum_{j=1}^i \ind_{\{U'_i\le 1/2\}}-\ind_{\{U'_i>1/2\}}.
\end{equation}
\item Then, one defines $H_i$ by $H_0=0$ and
\begin{equation}
 H_{i+1}-H_{i}=m \text{ if }  U_{i+1} \in \left[1-(1-k^2/n^2, 
 )^{m-1},1-(1-k^2/n^2)^{m}\right).
\end{equation}
\item Finally, one sets
\begin{equation}
 X_t=\bar X_i  \text{ if }  t \in [H_i,H_{i+1}). 
\end{equation}
\end{itemize}
From this construction one has that $H_i\ge J_i$ and $\bar X_i \ge  \bar W^{(1)}_{i}- \bar W^{(2)}_{i}$ for all $i$.
Therefore
\begin{equation}
\tau':=\inf\{t\ |\ X_t=0\}=H_{\inf\{i\ | \ \bar X_i=0\}}\ge J_{\inf\{i\ |   \bar W^{(1)}_{i}= \bar W^{(2)}_{i}\}}=\tau.
\end{equation}
By construction $(X_t)_{t\ge 0}$ is a random walk with transition probability $p(x, x\pm 1)=k^2/(2n^2)$ and $p(x,x)=1-k^2/n^2$, and therefore 
we proved \eqref{comparf}. 
We now finish the proof of the main theorem.
From \eqref{comparf} and \eqref{orib}
\begin{equation}
 \bar d (t_{\alpha+1})\le e^{-\alpha}(1+o(1))+ Q^{\lceil\frac{ k e^{-\alpha}}{\sqrt{n}}\rceil}[\tau'>n],
\end{equation}
where  $Q^{m}$ is the law of $(X_t)_{t\ge 0}$ starting from $m$. 
Then from Proposition \ref{ipsos}
\begin{equation}
Q^{\lceil\frac{ k e^{-\alpha}}{\sqrt{n}}\rceil}[\tau'>n]\le e^{-\alpha}(1+o(1))
\end{equation}
and therefore
\begin{equation}
 \Tm(\gep)\le \frac{n}{4}\log n + n\log(\gep/2)+o(n). 
\end{equation}
Using the same technique one can get that there exists a function $c(\gep)$ that goes to infinity when $\gep$ goes to $0$ such that
\begin{equation}
 \Tm(1-\gep)\le \frac{n}{4}\log n - c(\gep) n+o(n).
\end{equation}
(this, together with the lower-bound part shows that the window of size $O(n)$ is an optimal result \eqref{optimo}).

\subsection{Diffusion bounds}

One is left with proving the approximation we used for the law of $\tau'$.
Let $n$ be a fixed integer.
Let $(X_t)_{t\ge 0}$ be a nearest-neighbor random walk on $\Z$ with transitions $p(x,x\pm1)=q(n)/2$, $p(x,x)=1-q(n)$.
We start our random walk from $\alpha s(n)$ where $s(n)$ and $q(n)$ satisfy:
\begin{equation}
s(n)^2=q(n)n.
\end{equation}
We denote $Q^x$ the probability associated to this random walk starting from $x$ and $Q=Q^0$.
We want to estimate $\tau'$, the hitting time of zero for this random walk.

\begin{proposition}\label{ipsos}
 One has for any given positive $\alpha$ and $\gb$
\begin{equation}
 \lim_{n\to\infty} Q^{\alpha s(n)}\left[\tau' > \gb n \right]=\frac{1}{\sqrt{2\pi}}\int_{ \left[-\frac{\alpha}{\sqrt{\gb}}\ ,\
\frac{\alpha}{\sqrt{\gb}}\right]}e^{-\frac{ s^2}{2}}\dd s\le \frac{\alpha}{\sqrt{\gb}}.
\end{equation}
\end{proposition}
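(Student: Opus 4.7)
The approach is to combine the reflection principle with a central limit theorem for the lazy walk at time $\beta n$. First I would exploit the fact that $(X_t)_{t\ge 0}$ is a \emph{nearest-neighbor} walk: starting from $x:=\lceil\alpha s(n)\rceil>0$, the walk cannot reach any $y\le 0$ without first hitting zero, so $\{X_t\le 0\}\subseteq\{\tau'\le t\}$. Applying the strong Markov property at the stopping time $\tau'$, together with the symmetry of the post-$\tau'$ increments, gives the standard reflection identity
\[
Q^x[X_t>0,\ \tau'\le t] = Q^x[X_t<0,\ \tau'\le t] = Q^x[X_t<0],
\]
whence summing over the decomposition $\{\tau'\le t\}=\{X_t<0\}\cup\{X_t=0,\tau'\le t\}\cup\{X_t>0,\tau'\le t\}$ yields
\[
Q^x[\tau'>t] \;=\; Q^x[X_t>0]-Q^x[X_t<0].
\]

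Next I would analyse the marginal distribution of $X_t$ by CLT. Writing $X_t-x=\sum_{i=1}^t Y_i$ with i.i.d.\ increments $P(Y_i=\pm 1)=q(n)/2$, $P(Y_i=0)=1-q(n)$, one has $\E[Y_i]=0$ and $\var(Y_i)=q(n)$, so the variance of $X_{\beta n}-x$ equals $q(n)\beta n=\beta s(n)^2$. In the relevant regime $s(n)\to\infty$, the Lindeberg--Berry--Esseen estimate gives
\[
(X_{\beta n}-x)/(s(n)\sqrt{\beta})\;\Longrightarrow\; \mathcal N(0,1).
\]
Since $x/(s(n)\sqrt{\beta})\to \alpha/\sqrt{\beta}$, the continuous-mapping theorem then yields $Q^x[X_{\beta n}<0]\to \Phi(-\alpha/\sqrt{\beta})$ and $Q^x[X_{\beta n}>0]\to \Phi(\alpha/\sqrt{\beta})$, and plugging into the reflection identity gives
\[
Q^{\alpha s(n)}[\tau'>\beta n]\;\longrightarrow\; \Phi(\alpha/\sqrt{\beta})-\Phi(-\alpha/\sqrt{\beta})\;=\;\frac{1}{\sqrt{2\pi}}\int_{-\alpha/\sqrt\beta}^{\alpha/\sqrt\beta} e^{-s^2/2}\dd s.
\]
The final inequality $\le \alpha/\sqrt\beta$ is immediate from $e^{-s^2/2}\le 1$ and $2/\sqrt{2\pi}<1$.

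The genuine subtleties, neither of which is a serious obstacle, are: (i) checking that laziness does not affect the reflection identity, which holds because laziness commutes with sign-reversal of the non-zero increments and so the reflected trajectory is equidistributed with the original after $\tau'$; and (ii) the fact that $s(n)$ need not be an integer, which only contributes an $o(1)$ error in the normalization $x/(s(n)\sqrt{\beta})\to\alpha/\sqrt\beta$. The main conceptual point to verify is just that $s(n)^2=q(n)n\to\infty$ in the regime of interest (which in the application of Proposition \ref{ipsos} holds since $s(n)=k/\sqrt n\to\infty$), so that the CLT is genuinely applicable.
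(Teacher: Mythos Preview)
Your proposal is correct and follows essentially the same route as the paper: first a reflection-principle identity reducing $Q^{x}[\tau'>t]$ to a marginal probability for $X_t$, then a CLT at time $\beta n$ with variance $q(n)\beta n=\beta s(n)^2$. The paper phrases the reflection step as the lemma $Q^m[\tau'>n]=Q^0[X_n\in[-m+1,m]]$, which is algebraically identical to your $Q^x[X_t>0]-Q^x[X_t<0]$ after centering and using symmetry, and it carries out the CLT by an explicit characteristic-function computation rather than invoking Berry--Esseen, but these are cosmetic differences.
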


In fact the proof can almost be reduced to proving the following lemma.

\begin{lemma}
For any positive integer $m$ and $n$,
\begin{equation}
 Q^m\left[\tau' > n \right]= Q\left[X_{n}\in [-m+1,m] \right]
\end{equation}
\end{lemma}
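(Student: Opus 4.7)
The plan is to prove this identity by applying the reflection principle to the lazy symmetric random walk $(X_t)_{t\ge 0}$. Since the walk has nearest-neighbor increments, if $X_0=m\ge 1$ then the event $\{\tau'>n\}$ under $Q^m$ coincides with the event $\{X_t\ge 1\text{ for all }0\le t\le n\}$, so it suffices to partition this event according to the endpoint $X_n$.

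The key step is the claim that for every integer $k\ge 1$,
\begin{equation*}
Q^m[X_n=k,\ \tau'\le n]=Q^m[X_n=-k].
\end{equation*}
I would prove it by the standard reflection bijection: given a trajectory $(X_0,\dots,X_n)$ starting at $m$ with $\tau'\le n$, reflect the portion after $\tau'$ about zero, i.e.\ replace $X_t$ by $-X_t$ for $t\ge\tau'$. This produces a trajectory ending at $-k$ instead of $k$, and it has the same probability because each increment has a symmetric distribution (a lazy "stay" step is invariant under reflection, and the $\pm 1$ jumps are exchanged with equal probability $q(n)/2$). Conversely, any path ending at $-k\le -1$ must visit $0$, so $\tau'\le n$ automatically; the inverse map is given by the same reflection. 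This yields a probability-preserving bijection between the two events.

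Subtracting from $Q^m[X_n=k]$ and summing over $k\ge 1$ gives
\begin{equation*}
Q^m[\tau'>n]=\sum_{k\ge 1}Q^m[X_n=k,\tau'>n]=\sum_{k\ge 1}\bigl(Q^m[X_n=k]-Q^m[X_n=-k]\bigr).
\end{equation*}
Using the translation invariance $Q^m[X_n=j]=Q[X_n=j-m]$ and the reflection symmetry $Q[X_n=j]=Q[X_n=-j]$, a change of variables $j=k-m$ in the first sum and $j=-k-m$ in the second gives $Q[X_n\ge 1-m]-Q[X_n\ge m+1]$, which, by symmetry again, equals $Q[X_n\in[-m+1,m]]$.

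I expect no real obstacle here: the only subtlety is the laziness of $X$, but the reflection bijection preserves it automatically, since stay steps are fixed by the reflection and $+1$/$-1$ jumps are interchanged by a measure-preserving involution on the increment space. One should be careful to check that the decomposition $\{X_n=k,\tau'\le n\}$ (for $k\ge 1$) is genuinely handled by the bijection (it is, since starting from $m$ and ending strictly positive with $\tau'\le n$ forces the walk to return above $0$ after touching it), but this is routine.
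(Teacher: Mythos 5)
Your proof is correct and follows essentially the same route as the paper: decompose $\{\tau'>n\}$ according to the endpoint $X_n$, apply the reflection principle to identify $Q^m[X_n=k,\,\tau'\le n]$ with $Q^m[X_n=-k]$, then use translation invariance and symmetry of the walk to collapse the telescoping sum to $Q[X_n\in[-m+1,m]]$. The only added touch is your explicit check that the lazy ``stay'' steps are preserved by the reflection bijection, which the paper leaves implicit.
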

\begin{proof}
 We have
\begin{equation}
Q^m \left[\tau > n \right]=\sum_{j>0} Q^m\left(\forall i\in [1,n-1]\ X_i>0, X_n=j\right).
\end{equation}
Moreover,
\begin{multline}
 Q^m\left(\forall i\in [1,n-1]\ X_i>0, X_n=j\right)\\
=Q^m\left(X_n=j\right)-Q^m\left(\exists i\in [1,n-1]\, X_i=0,\, X_n=j\right)\\
=Q^m\left(X_n=j\right)-Q^m\left[X_n=-j\right]=Q\left[X_n=j-m\right]-Q\left[X_n=j+m\right].
\end{multline}
The second inequality just comes from the application of the reflection principle.
Therefore
\begin{equation}
\sum_{j>0} Q^m\left(\forall i\in [1,n-1] X_i>0, X_n=j\right)=Q\left( X_n \in [-m+1,m]\right).
\end{equation}
\end{proof}

With the previous lemma, all one has to do is prove the convergence of $X_{\gb n}/\sqrt{\gb n q(n)}$ to a Gaussian variable.
We do it by computing the Fourrier transform.
For any fixed $K$ we have that uniformly for all $|t|\le K$

\begin{multline}
Q\left[e^{it\frac{X_{\gb n}}{\sqrt{\gb n q(n)}}}\right]
=\left[1-q(n)\left(1-\cos\frac{t}{\sqrt{\gb n q(n)}}\right)\right]^{\gb n}\\
=\left[1-q(n)\frac{t^2}{2 \gb n q(n)}(1+o(1))\right]^{\gb n}=e^{-\frac{t^2}{2}}(1+o(1)).
\end{multline}
Therefore
\begin{multline}
 \lim_{n\to\infty} Q^{\alpha s(n)}\left[\tau' > \gb n \right]= \lim_{n\to\infty} Q\left[\frac{X_{\gb n}}{\sqrt{n q{n}}}\in \frac{[-\alpha s(n)+1,\alpha s(n)]}{\sqrt{\gb n q(n)}}  \right]\\
=P \left[\mathcal N \in \left[-\frac{\alpha}{\sqrt{\gb}}\ ;\ \frac{\alpha}{\sqrt{\gb}}\right]\right].
\end{multline}
where $\mathcal N$ (with law denoted by $P$) is a standard normal variable.

\bigskip

{\bf Acknowledgment:} The authors are very grateful to Pietro Caputo for his constant scientific and technical support, and to
 François Simenhaus for several enlightening discussions. This work has been carried out in the Department of Mathematics of the University of 
Roma Tre during H.L's postdoc and R.L.'s research internship. They gratefully acknowledge the kind hospitality of the department, the support of the Applied Mathematics Department of the Ecole Polytechnique (for R.L.) and the support of ERC Advanced Research Grant ``PTRELSS'' (for H.L.).

% 
% 
% 
% where $(\tau^{(i)})_{i\ge O}$ are i.i.d.\ permutation in de chosen in the following manner. For $\tau^{(1)}$
% one chooses $(X,Y)$ uniformely at random in $([0,N]\cap \bbN)^2$ and then one set  $\tau^{(1)}:=\tau_{X,Y}$ if $X\ne Y$
% or $\tau^{(1)}=\id$ if $X=Y$ (doing this instead of picking a random transiposition allows us to have a lazy chain, which is not crucial to obtain our results, but it simplifies the proog at some point). We write $\bbP^{\xi}$ and $\bbE^{\xi}$ for the law, resp.\ expectation  
% related to $\eta_t$ with initial condition $\xi$.

\end{document}